\newtheorem{theorem}{Theorem}
\newtheorem{lemma}{Lemma}[section]
\numberwithin{equation}{section}
\def\ZA{\ensuremath{\mathcal A}}
\def\ZS{\ensuremath{\mathcal S}}
\def\ZR{\ensuremath{\mathbb R}}
\def\Z1{\ensuremath{\mathbf 1}}
\newcommand {\e }[1]{(\ref{#1})}
\newcommand {\lem }[1]{Lemma \ref{#1}}
\newcommand {\trm }[1]{Theorem \ref{#1}}
\newcommand {\sect }[1]{Section \ref{#1}}
\newcommand\numberthis{\addtocounter{equation}{1}\tag{\theequation}}
\author{Gevorg Mnatsakanyan}
\title[Weighted strong-sparse estimates]{Sharp weighted estimates for strong-sparse operators}
\address{Mathematical Institute, University of Bonn, Endenicher Allee 60, 53115 Bonn, Germany}
\curraddr{}
\email{gevorg@math.uni-bonn.de}
\subjclass[2020]{42B25, 42A82}
\keywords{Weighted inequalities, Sharp inequalities, Sparse operators, Power weights}
\definecolor{midnightblue}{HTML}{0059b3}
\definecolor{chromered}{HTML}{f14233}
\begin{document}

\maketitle

\begin{abstract}
    We prove the sharp weighted-$L^2$ bounds for the strong-sparse operators introduced in \cite{KaragulyanM}. The main contribution of the paper is the construction of a weight that is a lacunary mixture of dual power weights. This weights helps to prove the sharpness of the trivial upper bound of the operator norm.
\end{abstract}

\section{Introduction}
The theory of weighted inequalities started with the seminal work of Muckenhoupt \cite{Muck}, where he proved that the Hardy-Littlewood maximal operator is bounded on $L^p(w)$, $1< p< \infty$, for positive measurable $w :\ZR \to \ZR$ if and only if
\begin{equation}\label{Muckenhouptclass}
    [w]_{A_p} := \sup\limits_{I}  \left( \frac{1}{|I|} \int_I w \right) \left( \frac{1}{|I|} \int_I w^{-\frac{1}{p-1}} \right)^{p-1} < \infty,
\end{equation}
where the supremum is taken over all intervals and $|I|$ denotes the Lebesgue measure of the interval. If \e{Muckenhouptclass} holds, then $w$ is said to be in the Muckenhoupt class $A_p$ and the quantity $[w]_{A_p}$ is called its $A_p$ characteristic. Later, Buckley \cite{Buck} obtained the sharp dependence of the norm of the maximal operator on the $A_p$ characteristic. Namely, he proved that
\begin{align}
    \| M \|_{L^p(w) \to L^{p,\infty} (w)} \lesssim [w]_{A_p}^{ \frac{1}{p} }, \label{sharpweightedweakmaximal}\\
    \| M \|_{L^p(w) \to L^{p} (w)} \lesssim [w]_{A_p}^{ \frac{1}{p-1} }, \label{sharpweightedmaximal}
\end{align}
and these are sharp in the sense of the theorems below.

The problem of the sharp dependence of the $L^2(w) \to L^2(w)$ norm of the Cald\'eron-Zygmund operator on the $A_2$ characteristic of $w$ is known as the $A_2$-conjecture. It was first proved by Hyt\"onen \cite{Hyt2, Hyt}. A simpler proof was given by Lerner \cite{Ler, Ler2} proving that the Cald\'eron-Zygmund operators can be dominated by the simple sparse operators. Later, it was proved that a number of operators in harmonic analysis admit pointwise or norm domination by the sparse operators \cite{CondeAlonsoRey, Ler3, Kar1, Lac2, Ler, Ler2}. On the other hand, $L^p$ and weighted-$L^p$ bounds for the sparse operators are fairly easy to obtain\cite{Lac2}.

Let us have a family $\ZS$ of intervals in $\ZR$ and $0 < \gamma <1$. $\ZS$ is called $\gamma$-sparse, or just sparse, if there exists pairwise disjoint subsets $E_A \subset A$, $A\in \ZS$, such that $|E_A| \geq \gamma |A|$.
Let us set for an interval $B$
\begin{equation*}
    \langle f \rangle_B := \frac{1}{|B|} \int_B |f|, \quad M_B f:= \sup\limits_{ A \text{ intervals} :\: A\supset B} \langle f \rangle_A.
\end{equation*}

For a sparse family $\ZS$, we define the sparse and the strong-sparse operators as
\begin{align}
    \ZA _{\ZS} f (x) := \sum\limits_{A \in \ZS} \langle f \rangle_A \cdot \Z1_A (x), \\
    \ZA^* _{\ZS} f (x) := \sum\limits_{A \in \ZS} ( M_A f ) \cdot \Z1_A (x),
\end{align}
respectively.
The sharp weighted bound for the sparse operator\cite{Lac2} is as follows
\begin{equation}\label{sharpweightedsparse}
    \| \ZA_{\ZS} \|_{L^p(w) \to L^p(w) } \lesssim [w]_{A_p}^{ \max (1 , \frac{1}{p-1} ) }.
\end{equation}

The strong-sparse operators were introduced by Karagulyan and the author in \cite{KaragulyanM}, where $L^p$ and weak-$L^1$ estimates are proved in the setting of an abstract measure space with ball-basis. In this paper, we obtain the sharp dependence of the weighted-$L^2$ norm of the strong-sparse operator on the $A_2$ characteristic of the weight.
\begin{theorem}\label{Weak}
For an $A_2$ weight $w$ we have the bound
\begin{equation}
    \| \ZA^*_{\ZS} \|_{L^2 (w) \to L^{2,\infty} (w) } \lesssim [w]_{A_2}^{\frac{3}{2} }.
\end{equation}
The inequality is sharp in the following sense: there exist a sparse family $\ZS$ and a sequence of weights $w_{\alpha}$ such that
\begin{equation}
    [w_{\alpha}]_{A_2} \to \infty, \text{ as } \alpha \to 0,
\end{equation}
and for any function $\phi : [0, \infty ) \to [0, \infty )$ with $\phi (x) / x^{\frac{3}{2}} \to 0$ as $x \to \infty$, we have
\begin{equation}
    \frac{ \| \ZA^*_{\ZS} \|_{L^2 (w_{\alpha}) \to L^{2,\infty} (w_{\alpha}) } }{ \phi ( [w_{\alpha}]_{A_2} ) }  \to \infty, \text{ as } \alpha \to 0. 
\end{equation}
\end{theorem}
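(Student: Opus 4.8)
The plan is to get $[w]_{A_2}^{3/2}$ as a soft consequence of the sharp bounds already on record for $M$ and $\ZA_\ZS$, which is why I think of it as the ``trivial'' bound. The starting point is the pointwise domination
\[
\ZA^*_\ZS f \le \ZA_\ZS (Mf).
\]
Indeed, if $y\in A$ then every interval $B\supseteq A$ contains $y$, so $\langle f\rangle_B\le Mf(y)$; taking the supremum over $B\supseteq A$ gives $M_A f\le Mf(y)$ for all $y\in A$, hence $M_A f\le \langle Mf\rangle_A$, and summing over $A\ni x$ yields the claim. Since the $L^{2,\infty}(w)$ quasinorm is monotone on nonnegative functions, it remains to estimate $\|\ZA_\ZS(Mf)\|_{L^{2,\infty}(w)}$, and I would split $[w]_{A_2}^{3/2}=[w]_{A_2}\cdot[w]_{A_2}^{1/2}$ accordingly. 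First, interpolating the strong estimate \eqref{sharpweightedsparse} between two exponents $p_0<2<p_1$, and using that $A_2$ is open with quantitative control of the characteristic (the sharp reverse Hölder inequality permits $p_0=2-c/[w]_{A_\infty}$ with $[w]_{A_{p_0}}\lesssim[w]_{A_2}$, and symmetrically for $p_1$ through $\sigma$), real interpolation of the sublinear $\ZA_\ZS$ gives $\|\ZA_\ZS\|_{L^{2,\infty}(w)\to L^{2,\infty}(w)}\lesssim [w]_{A_2}$, the openness contributing only an error exponent $O(1/[w]_{A_2})$, hence a bounded factor. Second, the weak maximal bound \eqref{sharpweightedweakmaximal} at $p=2$ gives $\|Mf\|_{L^{2,\infty}(w)}\lesssim [w]_{A_2}^{1/2}\|f\|_{L^2(w)}$. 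Composing,
\[
\|\ZA^*_\ZS f\|_{L^{2,\infty}(w)}\le \|\ZA_\ZS(Mf)\|_{L^{2,\infty}(w)}\lesssim [w]_{A_2}\,\|Mf\|_{L^{2,\infty}(w)}\lesssim [w]_{A_2}^{3/2}\|f\|_{L^2(w)}.
\]

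\textbf{Sharpness: reduction and mechanism.} It suffices to produce $\ZS$ and weights $w_\alpha$ with $[w_\alpha]_{A_2}\to\infty$ together with test functions $f_\alpha$ for which the ratio $\|\ZA^*_\ZS f_\alpha\|_{L^{2,\infty}(w_\alpha)}/\|f_\alpha\|_{L^2(w_\alpha)}\gtrsim [w_\alpha]_{A_2}^{3/2}$: dividing by $\phi([w_\alpha]_{A_2})$ and using $x^{3/2}/\phi(x)\to\infty$ then forces the quotient in the theorem to diverge. Write $A:=[w_\alpha]_{A_2}$ and let $N\approx A$. I would take the lacunary tower $I_k=[0,2^{-k}]$, $0\le k\le N$, which is $\tfrac12$-sparse with $E_{I_k}=I_k\setminus I_{k+1}=:R_k$, and $f=\sigma\Z1_{I_0}$ with $\sigma=w_\alpha^{-1}$. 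The structural point, and the reason the exponent rises from $1$ (ordinary sparse) to $\tfrac32$, is that the tail maximal sees the coarse average at \emph{every} scale: since $I_0\supseteq I_k$ and $\supp f\subseteq I_0$, we have $M_{I_k}f\ge \langle f\rangle_{I_0}=\sigma(I_0)$ for all $k$, whence on $R_m$
\[
\ZA^*_\ZS f=\sum_{k\le m}M_{I_k}f\ge (m+1)\,\sigma(I_0).
\]
Thus $\ZA^*_\ZS f$ grows linearly across the $N\approx A$ scales, and this extra factor $N\approx A$ over a single average is exactly the source of the additional half power once measured in the weak norm.

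\textbf{Sharpness: the weight.} Since $\{\ZA^*_\ZS f>m\sigma(I_0)\}\supseteq I_m$ and $\|f\|_{L^2(w_\alpha)}^2=\int_{I_0}\sigma^2 w_\alpha=\sigma(I_0)$, the squared ratio is
\[
\frac{\|\ZA^*_\ZS f\|_{L^{2,\infty}(w_\alpha)}^2}{\|f\|_{L^2(w_\alpha)}^2}\gtrsim \sigma(I_0)\,\sup_{m\le N}m^2\,w_\alpha(I_m),
\]
and the target is that this be $\gtrsim A^3$. Choosing $m\approx N\approx A$ reduces the goal to the single \emph{cross-scale} relation $\sigma(I_0)\,w_\alpha(I_N)\sim A=[w_\alpha]_{A_2}$, pairing the dual average at the coarse scale $I_0$ with the weight mass at the fine scale $I_N$. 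A single power weight cannot do this: for $w=x^a$ the mass $w(I_N)\sim 2^{-2N}$ decays and kills the product. This is precisely what the lacunary mixture of dual power weights is built to repair: at the fine scales $w_\alpha$ is arranged to behave like a power weight that prevents $w_\alpha(I_m)$ from decaying, while at the coarse scale a reciprocal (dual) power piece inflates $\langle\sigma\rangle_{I_0}$ to size $\sim A$, the two families of spikes being placed at separated lacunary scales so that their contributions add across scales while no single interval sees a product larger than $A$.

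\textbf{Main obstacle.} The entire difficulty lies in the weight. One must verify that the lacunary superposition of the direct and dual power pieces has $A_2$ characteristic of the intended order $A$ --- a supremum over \emph{all} intervals, in particular those straddling several spikes of opposite type --- while simultaneously the decoupled cross-scale quantity $\sigma(I_0)\,w_\alpha(I_N)$ attains the size $\sim A$. Controlling the $A_2$ supremum against such straddling intervals, and calibrating the lacunary spacing so that it is sparse enough to keep the characteristic from overshooting $A$ yet dense enough that $N\approx A$ scales genuinely contribute, is the technical heart of the construction; the geometric-sum bookkeeping above is then routine.
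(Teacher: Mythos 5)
Your upper bound does not go through as written. The pointwise domination $\ZA^*_\ZS f\le\ZA_\ZS(Mf)$ is fine (and appears in the paper's introduction), but the crux of your argument is the claim that real interpolation between strong bounds at $p_0=2-c/[w]_{A_2}$ and $p_1>2$ yields $\|\ZA_\ZS\|_{L^{2,\infty}(w)\to L^{2,\infty}(w)}\lesssim[w]_{A_2}$, the openness of $A_2$ "contributing only a bounded factor." The bounded factor you account for is $[w]_{A_{p_0}}^{1/(p_0-1)}\lesssim[w]_{A_2}$, which is indeed harmless; what you do not account for is the interpolation constant itself, which blows up as the lower endpoint approaches $2$. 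Running the standard splitting $f=f\Z1_{\{|f|>c\lambda\}}+f\Z1_{\{|f|\le c\lambda\}}$ for $\|f\|_{L^{2,\infty}(w)}=1$, the $L^{p_0}(w)$ norm of the large part costs a factor $(2-p_0)^{-1}$, and optimizing over $c$ one obtains only
\begin{equation*}
\|\ZA_\ZS\|_{L^{2,\infty}(w)\to L^{2,\infty}(w)}\lesssim \frac{\max(M_0,M_1)}{(2-p_0)^{1/2}}\sim[w]_{A_2}\cdot[w]_{A_2}^{1/2},
\end{equation*}
so that composing with $\|M\|_{L^2(w)\to L^{2,\infty}(w)}\lesssim[w]_{A_2}^{1/2}$ returns the trivial exponent $2$, not $3/2$. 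This loss is not sloppy bookkeeping: the rank-one operator $f\mapsto \Z1_{[0,1]}\int fg$ with $g(x)=x^{-1/2}\Z1_{[1,R]}$ and $\log R\sim(2-p_0)^{-1}$ has $L^{p_0}$ and $L^{p_1}$ norms $\sim(2-p_0)^{-1/2}$ but $L^{2,\infty}\to L^{2,\infty}$ norm $\sim(2-p_0)^{-1}$, so no interpolation theorem that uses only sublinearity and the two endpoint norms can give your claim. This is exactly why the paper warns that \trm{Weak} does not follow from black-boxing: its proof instead decomposes $\ZS$ into groups $A_j$ according to the size of $M_Bf$ relative to $\lambda/[w]_{A_\infty}$, kills the sets where many $\Z1_B$, $B\in A_j$, pile up using sparseness together with the $A_\infty$ reverse H\"older estimate (\lem{E<Q}), and only then invokes the weak maximal bound \e{sharpweightedweakmaximal}; that is where $3/2=1+1/2$ really comes from.

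Your sharpness section has the correct mechanism (the tail maximal satisfies $M_{I_k}f\ge\langle f\rangle_{I_0}$, forcing linear growth across $N\sim[w]_{A_2}$ scales, and one needs the cross-scale relation $\sigma(I_0)\,w_\alpha(I_N)\gtrsim[w_\alpha]_{A_2}$), and this reduction matches the paper. But you then (a) assert that a single power weight cannot realize this relation, and (b) on that basis leave the construction, which you yourself call the technical heart, undone, gesturing at a lacunary mixture that you never build or verify. Claim (a) is false, and as a consequence the detour in (b) is unnecessary: the paper's example for \trm{Weak} is the plain power weight $w_\alpha=|x|^{\alpha-1}$, $\sigma=|x|^{1-\alpha}$, with $\ZS=\{[0,2^{-k})\}$ and $f=\sigma\Z1_{[0,1)}$. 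Then $\sigma(I_0)\sim1$, $M_{I_k}f\ge\langle\sigma\rangle_{[0,1)}\sim1$, and $w_\alpha(I_m)=2^{-m\alpha}/\alpha$, so for $m\sim N\sim1/\alpha$ one has $w_\alpha(I_N)\sim1/\alpha\sim[w_\alpha]_{A_2}$: over $N\sim1/\alpha$ dyadic scales the weight $|x|^{\alpha-1}$ loses only a constant factor of its mass. (Your computation $w(I_N)\sim2^{-2N}$ applies to the other dual weight $|x|^{1-\alpha}$; you tested only one of the two.) Plugging into your own inequality, the squared ratio is $\gtrsim\sigma(I_0)\,N^2\,w_\alpha(I_N)\sim\alpha^{-3}$, i.e.\ the ratio is $\gtrsim[w_\alpha]_{A_2}^{3/2}$, with no new weight needed; the lacunary mixture of dual power weights is what the paper constructs for the strong bound, \trm{Strong}, not for this theorem. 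As it stands, then, your proposal establishes neither half of the statement: the upper bound argument yields only exponent $2$, and the lower bound is never completed.
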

\begin{theorem}\label{Strong}
For an $A_2$ weight $w$ we have the bound
\begin{equation}
    \| \ZA^*_{\ZS} \|_{L^2 (w) \to L^2(w) } \lesssim [w]_{A_2}^2.
\end{equation}
The inequality is sharp in the following sense: there exist a sparse family $\ZS$ and a sequence of weights $w_{\alpha}$ such that
\begin{equation}
    [w_{\alpha}]_{A_2} \to \infty, \text{ as } \alpha \to 0,
\end{equation}
and for any function $\phi : [0, \infty ) \to [0, \infty )$ with $\phi (x) / x^2 \to 0$ as $x \to \infty$, we have
\begin{equation}
    \frac{ \| \ZA^*_{\ZS} \|_{L^2 (w_{\alpha}) \to L^2 (w_{\alpha}) } }{ \phi ( [w_{\alpha}]_{A_2} ) }  \to \infty, \text{ as } \alpha \to 0. 
\end{equation}
\end{theorem}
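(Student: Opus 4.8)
\emph{The upper bound.} The plan is to dominate $\ZA^{*}_{\ZS}$ pointwise by the ordinary sparse operator applied to the Hardy--Littlewood maximal function and then invoke the two sharp estimates already at our disposal. First I record that
\[
    \ZA^{*}_{\ZS} f \;\le\; \ZA_{\ZS}(Mf).
\]
Indeed, if $x\in A$ then every interval $B\supset A$ contains $x$, so $M_A f=\sup_{B\supset A}\langle f\rangle_B\le Mf(x)$; since $M_A f$ is constant on $A$, averaging over $A$ gives $M_A f\le\langle Mf\rangle_A$, and summing against $\Z1_A$ proves the inequality. Taking $p=2$ in \e{sharpweightedsparse} and in \e{sharpweightedmaximal} we then obtain
\[
    \|\ZA^{*}_{\ZS}f\|_{L^2(w)}\le\|\ZA_{\ZS}(Mf)\|_{L^2(w)}\lesssim[w]_{A_2}\,\|Mf\|_{L^2(w)}\lesssim[w]_{A_2}^{2}\,\|f\|_{L^2(w)} ,
\]
which is the asserted bound. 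The two factors of $[w]_{A_2}$ arise separately from the sparse summation and from the maximal averaging; the content of the sharpness statement is that there is a configuration saturating both at once.

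\emph{Reduction of the sharpness.} Writing $Q_\alpha:=[w_\alpha]_{A_2}$, it suffices to produce a sparse family $\ZS$, weights $w_\alpha$ with $Q_\alpha\to\infty$, and test functions $f_\alpha$ such that
\[
    \|\ZA^{*}_{\ZS}f_\alpha\|_{L^2(w_\alpha)}\;\gtrsim\;Q_\alpha^{2}\,\|f_\alpha\|_{L^2(w_\alpha)} .
\]
Granting this, for any $\phi$ with $\phi(x)/x^{2}\to0$ we get $\|\ZA^{*}_{\ZS}\|_{L^2(w_\alpha)\to L^2(w_\alpha)}/\phi(Q_\alpha)\gtrsim Q_\alpha^{2}/\phi(Q_\alpha)\to\infty$, which is exactly the claim.

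\emph{The construction.} Fix $a=1-\delta$ with $\delta=\delta(\alpha)\to0$, so that the relevant power weights have $A_2$-characteristic of order $Q_\alpha\sim\delta^{-1}$. The decisive point is that a \emph{single} power weight $w=|x|^{1-\delta}$ cannot do better than one factor of $Q_\alpha$: placing $f=w^{-1}\Z1_{(0,\varepsilon]}$ and taking reaching intervals $A$ to the right of the mass, the averages $M_A f$ grow geometrically in the scale of $A$, so the nested sparse sum $\sum_{A\ni x}M_A f$ is governed by its top term and merely reproduces a maximal-type profile $\sim 1/|x|$; one computes $\|\ZA^{*}_{\ZS}f\|_{L^2(w)}\sim Q_\alpha\|f\|_{L^2(w)}$, exactly the maximal bound. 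To recover the second factor I would force the nested sparse family to act as a genuine \emph{second} maximal operator, which requires the intermediate profile $Mf$ to be itself extremal at the coarser scales. This is engineered by letting $w_\alpha$ be a lacunary mixture of a power weight and its dual: on a geometrically separated sequence of scales one alternates blocks on which $w_\alpha\sim|x-c|^{1-\delta}$ with blocks on which $w_\alpha\sim|x-c'|^{-(1-\delta)}$, so that the spread-out output produced on one block is, with respect to the dual weight on the next block, again concentrated and hence again boosted by a factor $Q_\alpha$. The test function $f_\alpha$ is supported where $\sigma_\alpha=w_\alpha^{-1}$ is largest, its reaching averages are realized by intervals $B$ spanning a block, and the output lands on a region of large $w_\alpha$-measure; by the alternation these gains compound over the $\sim Q_\alpha$ scales to yield $Q_\alpha^{2}$.

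\emph{Main obstacle.} The crux---and the technical heart of the argument---is to verify that this mixture keeps $[w_\alpha]_{A_2}$ of the \emph{exact} order $Q_\alpha\sim\delta^{-1}$ while the operator norm reaches $Q_\alpha^{2}$. The delicate supremum in $[w_\alpha]_{A_2}$ is over long intervals spanning many blocks: one must check that on such intervals the contributions of the power-weight blocks and of their duals average out rather than reinforce, so that the characteristic does not inflate by the number of scales. This is precisely what the lacunary separation and the dual alternation are designed to guarantee, and estimating $[w_\alpha]_{A_2}$ over all intervals is where the bulk of the work lies; the matching lower bound for $\|\ZA^{*}_{\ZS}f_\alpha\|_{L^2(w_\alpha)}$ then follows by tracking that each block contributes comparably to the sum.
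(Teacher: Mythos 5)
Your upper bound is correct and coincides with the paper's own argument: the pointwise domination $\ZA^{*}_{\ZS} f \le \ZA_{\ZS}(Mf)$ followed by \e{sharpweightedsparse} and \e{sharpweightedmaximal} at $p=2$. The reduction of sharpness to producing test functions with $\|\ZA^{*}_{\ZS}f_\alpha\|_{L^2(w_\alpha)}\gtrsim [w_\alpha]_{A_2}^{2}\|f_\alpha\|_{L^2(w_\alpha)}$ is also fine. But the sharpness part itself --- which, as the paper stresses, is the entire content of the theorem, since the upper bound is a trivial black-boxing --- is a statement of strategy, not a proof. You never define $w_\alpha$, never specify the sparse family, never choose $f_\alpha$, and never verify either $[w_\alpha]_{A_2}\sim\delta^{-1}$ or the matching lower bound for the operator norm; you concede this yourself when you write that ``estimating $[w_\alpha]_{A_2}$ over all intervals is where the bulk of the work lies.'' That unperformed estimate, together with the unperformed norm computation, is precisely the theorem. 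For comparison, the paper's construction is completely explicit: $\sigma=w^{-1}$ equals $x^{\alpha-1}$ except on notches of relative length $\alpha$ at the endpoints of the dyadic intervals, e.g.\ $\sigma(x)=\frac{2^{2k(1-\alpha)}}{\alpha}(2^{-k}-x)^{1-\alpha}$ on $[(1-\alpha)2^{-k},2^{-k})$; the bound $[\sigma]_{A_2}\sim 1/\alpha$ is checked by a case analysis over dyadic and then arbitrary intervals; the sparse family is $B_{k,j}=[2^{-k}-2^{-j},2^{-k})$ for $j\ge a+k$ (with $\alpha=2^{-a}$); the test function is $\sigma\Z1_{[0,1)}$; and the lower bound follows from $M_{B_{k,j}}\sigma\sim 2^{k(1-\alpha)}/\alpha$ together with a change of variables identifying $\int_0^1\big(\sum_j\Z1_{B_{k,j}}\big)^2w$ with the sharp constant $\sim\alpha^{-3}$ of the ordinary sparse operator against the power weight $y^{\alpha-1}$.

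Beyond incompleteness, two features of your sketch point in the wrong direction. First, the mechanism is not a compounding of gains across the $\sim Q_\alpha$ scales: in the paper each individual scale $k$ already contributes the full factor $\alpha^{-4}$ relative to its share of $\|\sigma\Z1_{[0,1)}\|^2_{L^2(w)}$ ($\alpha^{-2}$ from the squared coefficients $M_{B_{k,j}}\sigma$, whose size reflects the mass of $\sigma$ on all of $[0,2^{-k}]$, times the sparse gain realized inside the notch against the local dual power weight), and the scales are then simply summed. Second, your description of the sparse family (nested ``reaching'' intervals) leaves its structure unspecified; if it were a single chain, then by \trm{Chain} the operator norm is capped at $[w]_{A_2}^{3/2}$ no matter which weight you choose, so no such construction can work. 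The family must contain infinitely many mutually incomparable chains, each anchored at one of the points $2^{-k}$ where the weight has a dual-power notch; any completed argument has to build in this structural constraint, and your sketch does not address it.
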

On the other hand, we have the following simple partial improvement for the strong bound.
\begin{theorem}\label{Chain}
Let the sparse family $\ZS$ be such that for any two $A, B \in \ZS$ either $A\subset B$ or $B \subset A$. Then, we have
\begin{equation}
    \| \ZA^*_{\ZS} \|_{L^2 (w) \to L^2(w) } \lesssim [w]_{A_2}^{\frac{3}{2}}.
\end{equation}
\end{theorem}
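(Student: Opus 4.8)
The plan is to exploit the essentially one-dimensional nature of a chain. Normalize $f\ge 0$ (the operator depends only on $|f|$) and enumerate $\ZS$ as $A_1\supsetneq A_2\supsetneq\cdots$. Since the $A_m$ are nested, every point lies in an initial segment of the chain, so $\ZA^*_\ZS f$ is constant, equal to $S_m:=\sum_{k\le m}M_{A_k}f$, on the annulus $D_m:=A_m\setminus A_{m+1}$; moreover $(M_{A_k}f)_k$ is non-decreasing, so the super-level sets of $\ZA^*_\ZS f$ are exactly the $A_m$. Hence $\|\ZA^*_\ZS f\|_{L^2(w)}^2=\sum_m S_m^2\,w(D_m)$. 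First I would record the two structural consequences of sparseness for a chain: at most $O_\gamma(1)$ intervals occur at each dyadic length-scale, so after thinning to a subchain the lengths $|A_m|$ halve at each step; and then the elementary $A_2$--Cauchy--Schwarz bound $w(A)/w(E)\le [w]_{A_2}(|A|/|E|)^2$ (valid for $E\subset A$, using $|E|^2\le w(E)\,w^{-1}(E)$) gives $w(A_m)\lesssim_\gamma [w]_{A_2}\,w(D_m)$, i.e. the weights $w(A_m)$ decay geometrically at an $A_2$-controlled rate.

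The starting inequality is the pointwise bound $\ZA^*_\ZS f\le \ZA_\ZS(Mf)$, which holds for every sparse family because $M_A f\le \langle Mf\rangle_A$ (as $Mf\ge M_A f$ on $A$). Combined with \e{sharpweightedsparse} at $p=2$ and the strong maximal bound \e{sharpweightedmaximal}, this already yields the general exponent $2$; the content of \trm{Chain} is to save one factor $[w]_{A_2}^{1/2}$ on a chain. The natural source of this saving is the weak-type maximal estimate \e{sharpweightedweakmaximal}, which costs only $[w]_{A_2}^{1/2}$ in place of the $[w]_{A_2}$ of the strong bound. Accordingly I would pass to the dual formulation, bounding $\sum_k (M_{A_k}f)\int_{A_k} g$ for $g\in L^2(w^{-1})$, telescope the sum along the chain using the monotonicity of $M_{A_k}f$ and the nesting of the $A_k$, and estimate the $f$-side through the distribution function of $Mf$ rather than its $L^2(w)$ norm, so that only the weak constant enters.

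The main obstacle is the overlap, or accumulation, factor. When a single large enveloping interval realizes the supremum defining $M_{A_k}f$ for a long run of consecutive $A_k$ (a \emph{flat block}), the partial sum $S_m$ grows linearly in the number of intervals in the block, and it is precisely this counting that pushes the exponent above the sparse value $1$; a layer-cake expansion of $\inf_{A_k}Mf$ shows the difficulty cannot be circumvented by simply discarding overlaps. The crux is therefore to show that this accumulation costs no more than $[w]_{A_2}^{1/2}$. I expect to achieve this by summing the geometric series supplied by the first paragraph against the weak maximal inequality: the slow, $A_\infty$-rate decay of $w(A_m)$ contributes powers of $[w]_{A_2}$ that must be balanced \emph{exactly} against the $A_2$ factor coming from Cauchy--Schwarz, so that their product is $[w]_{A_2}^{3}$ at the level of $\|\ZA^*_\ZS f\|_{L^2(w)}^2$. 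Bounding each flat block crudely by its maximal term is too lossy and returns the exponent $2$; the delicate point is to keep the bookkeeping of the overlap coupled to the decay of the weights rather than separating the two.
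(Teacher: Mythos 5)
Your proposal is a program, not a proof: the decisive estimate is exactly the step you defer. You correctly isolate the crux (the accumulation of $M_{A_k}f$ along flat blocks must cost only $[w]_{A_2}^{1/2}$ rather than $[w]_{A_2}$), and your structural observations are sound --- a sparse chain has $O_\gamma(1)$ intervals per dyadic scale, and $|E|^2\le w(E)\sigma(E)$ does give $w(A)\le [w]_{A_2}(|A|/|E|)^2w(E)$, hence geometric decay $w(A_{m+1})\le(1-c/[w]_{A_2})w(A_m)$ along a thinned chain. But "summing the geometric series against the weak maximal inequality" is precisely where the obvious implementations fail. For instance, applying the weak bound $M_{A_k}f\le [w]_{A_2}^{1/2}\|f\|_{L^2(w)}w(A_k)^{-1/2}$ to each term of $S_m=\sum_{k\le m}M_{A_k}f$ and summing the geometric series gives $S_m\lesssim [w]_{A_2}^{3/2}\|f\|_{L^2(w)}w(A_m)^{-1/2}$, which looks like the right shape, but substituting it into $\sum_m S_m^2\,w(D_m)$ produces $[w]_{A_2}^{3}\|f\|^2_{L^2(w)}\sum_m w(D_m)/w(A_m)$, a divergent sum (in the power-weight example $w=|x|^{\alpha-1}$, $A_m=[0,2^{-m})$, this bound on $S_m$ overshoots the true value $S_m\sim m$ by a factor $\sim 1/(\alpha m)$ for $m\ll 1/\alpha$). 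Abel summation runs into the same wall: $\sum_m M_{A_m}f\,S_m\,w(A_m)$ cannot be closed using only the weak-type bound and the decay of $w(A_m)$ term by term. So the coupling you flag as "delicate" is not a bookkeeping issue; it is the theorem, and nothing in the proposal supplies it. (A smaller gap: thinning the chain is not innocent either, since a discarded term $M_{A_k}f\cdot\Z1_{A_k}$ is dominated only by mixed terms $M_{A_{\mathrm{small}}}f\cdot\Z1_{A_{\mathrm{large}}}$, which are again of strong-sparse type rather than terms of the thinned operator.)

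For comparison, the paper resolves the crux by a completely different mechanism, on the input side and without the weak-type bound. For each $B_i\in\ZS$ it takes the (essentially) largest interval $\pi(B_i)\supset B_i$ with $M_{B_i}g\le 2\langle g\rangle_{\pi(B_i)}$; since the $B_i$ are nested, the $\pi(B_i)$ can be arranged into a sparse chain $A_1\supset A_2\supset\cdots$. It then replaces $g$ by $\tilde g$, equal to the average of $g$ on each annulus $A_i\setminus A_{i+1}$ and to $g$ elsewhere. Two facts finish the proof: first, $\langle\tilde g\rangle_{A_i}\lesssim\langle\tilde g\rangle_B$ for $A_i=\pi(B)$ (because $A_{i+1}\subsetneq B$ by maximality of $\pi(B)$), whence the pointwise domination $\ZA_{\ZS}^*g\lesssim\ZA_{\ZS}\tilde g$ --- the flat-block accumulation is entirely absorbed into the ordinary sparse operator applied to the flattened function; second, averaging on disjoint sets costs at most $\sup_E w(E)\sigma(E)/|E|^2\le[w]_{A_2}$ at the level of squared norms, so $\|\tilde g\|_{L^2(w)}\lesssim[w]_{A_2}^{1/2}\|g\|_{L^2(w)}$. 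The sharp sparse bound \e{sharpweightedsparse} then gives $[w]_{A_2}\cdot[w]_{A_2}^{1/2}=[w]_{A_2}^{3/2}$. If you want to salvage your approach, the lesson from the paper is that the $[w]_{A_2}^{1/2}$ saving should be extracted from a single Cauchy--Schwarz on annuli of the chain of \emph{enveloping} intervals, not from the weak-type maximal estimate.
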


Looking at the definition of the strong-sparse operators, we see that $M_B f \leq Mf(x)$ for any $x\in B$. Thus, $M_B f \leq \langle Mf \rangle_B$ and we obtain
\begin{equation}
    \ZA^*_{\ZS} f (x) \leq \ZA_{\ZS} (Mf).
\end{equation}
Then, one can try to black-box the sharp weighted bounds \e{sharpweightedweakmaximal}, \e{sharpweightedmaximal} and \e{sharpweightedsparse} for \trm{Weak} and \trm{Strong}. As it will be shown in \sect{WeakBound}, the weighted weak-$L^2$ bound for the sparse operator is the same as for the strong one. Thus, \trm{Weak} will not follow from such a black-box. Instead, we will decompose the operator according to the magnitude of the $M_B f$ for the sparse intervals $B$, then, we will use the weighted weak bound of the maximal function \e{sharpweightedweakmaximal}. We will do this in \sect{WeakBound}.

As for \trm{Strong}, we see that by black-boxing the above mentioned inequalities we trivially get the upper bound, i.e.
\begin{align*}
\| \ZA^*_{\ZS} \|_{L^2 (w) \to L^2 (w) } &\leq \| \ZA_{\ZS} \circ M \|_{L^2 (w) \to L^2 (w) } \\
&\lesssim \| \ZA_{\ZS} \|_{L^2 (w) \to L^2 (w) } \| M \|_{L^2 (w) \to L^2 (w) } \lesssim [w]_{A_2}^{2}.
\end{align*}
Thus, the interesting thing about \trm{Strong} is to obtain the sharpness of this estimate. For that we will construct a weight which is a lacunary mixture of the dual power weights $x^{\alpha - 1}$ and $x^{1-\alpha}$. We will do this in \sect{StrongBound}.

In \sect{ChainCase}, we will prove \trm{Chain}.

We say $a \lesssim b$ if there is an absolute constant $c$, maybe depending on the sparse parameter $\gamma$, such that $a \leq c\cdot b$. Furthermore, we say $a\sim b$ if $a\lesssim b$ and $b \lesssim a$.

\section*{Acknowledgments}
The author is grateful to his advisor Grigori Karagulyan for posing this problem, for introducing him to the field of Harmonic Analysis and for his continuous support.

\section{The upper bound of \trm{Weak}} \label{WeakBound}

\subsection{A well-known property of $A_{\infty}$ weights}
Following \cite{Per2, Per1}, we say that $w$ is an $A_{\infty}$ weights if
\begin{equation}
    [w]_{A_{\infty} } := \sup\limits_{I} \frac{1}{ w(I) } \int_I M( w \Z1_I ) (x) dx < \infty.
\end{equation}
It is well-known that any $A_{p}$ weight is also an $A_{\infty}$ weights and that a reverse H\"older inequality holds for in the latter class. The following theorem with sharp constants is due to Hyt\"onen, P\'erez and Rela\cite{Per1}.
\begin{theorem}
If $w$ is an $A_{\infty}$ weight and $\epsilon = \frac{1}{4 [ w ]_{A_{\infty}} }$, then
\begin{equation*}
\langle w^{1+\epsilon} \rangle _I \leq 2 \left( \langle w \rangle _I \right)^{ 1+\epsilon },
\end{equation*}
for any interval $I$.
\end{theorem}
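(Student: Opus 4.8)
The plan is to fix an arbitrary interval $I$, normalize $\langle w\rangle_I=1$ (both sides are homogeneous of degree $1+\epsilon$ in $w$), and prove the inequality by a self-improvement/absorption scheme in which the $A_\infty$ hypothesis enters exactly once, through the defining bound $\int_J M(w\Z1_J)\le[w]_{A_\infty}\,w(J)$ valid for every subinterval $J\subseteq I$. I would work with the local dyadic maximal operator $M^d_I$ formed from the dyadic subintervals of $I$; it satisfies $w\le M^d_I w$ a.e.\ and $M^d_I w\le M(w\Z1_I)$ on $I$, so the Fujii--Wilson inequality applies to it.

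First I would write, by the layer-cake formula applied to $w^{\epsilon}$,
\[\int_I w^{1+\epsilon}=\epsilon\int_0^\infty t^{\epsilon-1}\,w(\{x\in I:w(x)>t\})\,dt,\]
and split the $t$-integral at $t=\langle w\rangle_I$. On the low range $t\le\langle w\rangle_I$ the crude bound $w(\{w>t\}\cap I)\le w(I)$ already produces the main term $\langle w\rangle_I^{\epsilon}w(I)$. For the high range $t>\langle w\rangle_I$ I would run a Calder\'on--Zygmund stopping argument at level $t$: the set $\{w>t\}\cap I$ is contained in $\{M^d_I w>t\}=\bigcup_j Q_j(t)$, where the $Q_j(t)$ are the maximal dyadic intervals with $t<\langle w\rangle_{Q_j(t)}\le 2t$, whence $w(\{w>t\}\cap I)\le\sum_j w(Q_j(t))$.

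The decisive step is to feed the Fujii--Wilson bound into this sum to show that the whole high-range contribution is at most a fixed fraction of $\int_I w^{1+\epsilon}$; concretely, I expect an estimate of the form
\[\epsilon\int_{\langle w\rangle_I}^\infty t^{\epsilon-1}\Big(\sum_j w(Q_j(t))\Big)\,dt\le C\,\epsilon\,[w]_{A_\infty}\int_I w^{1+\epsilon},\]
in which the $A_\infty$ characteristic appears linearly. Granting this, the choice $\epsilon=\tfrac1{4[w]_{A_\infty}}$ forces the coefficient $C\epsilon[w]_{A_\infty}=C/4$ below $\tfrac12$, and absorbing the high-range term into the left-hand side yields $\int_I w^{1+\epsilon}\le 2\,\langle w\rangle_I^{\epsilon}w(I)$, which is exactly the assertion. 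The whole design of the constants—the factor $2$ in the conclusion and the factor $4$ inside $\epsilon$—is dictated by this absorption.

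I expect the main obstacle to be precisely the displayed tail estimate. The Fujii--Wilson inequality controls the \emph{Lebesgue} measure of the level sets $\{M^d_I w>t\}$, whereas here I must control their \emph{$w$-measure} $\sum_j w(Q_j(t))$, and $w$ may concentrate heavily on these sets. Converting the Lebesgue-measure control into the required $w$-measure decay is a self-improving phenomenon: one should apply the $A_\infty$ bound not once but across the descendant scales of each $Q_j(t)$, so that the super-level $w$-mass decays geometrically in $t$ at a rate governed by $[w]_{A_\infty}$. Making this decay quantitative with the sharp linear dependence on $[w]_{A_\infty}$, and tracking the constants so that they collapse to the clean value $2$, is the technical heart of the argument; everything else is elementary layer-cake and Calder\'on--Zygmund bookkeeping.
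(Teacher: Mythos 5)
Your attempt must be measured against the proof of Hyt\"onen--P\'erez--Rela, since the paper itself does not prove this theorem but quotes it from \cite{Per1}. Your scaffolding is exactly the right one --- localized dyadic maximal operator $M^d_I$, layer-cake, splitting at $t=\langle w\rangle_I$, Calder\'on--Zygmund stopping intervals $Q_j(t)$, and absorption with $\epsilon=\tfrac1{4[w]_{A_\infty}}$ --- but the proposal has a genuine gap: the ``decisive step'' is only conjectured, and the mechanism you sketch for it would not close the argument. First, the obstacle you identify (converting Lebesgue-measure control of level sets into $w$-measure control) is in fact a non-issue: by maximality of the stopping intervals, $t<\langle w\rangle_{Q_j(t)}\le 2t$, so with $\Omega_t:=\{x\in I: M^d_I w(x)>t\}$ one has $t|\Omega_t|\le w(\Omega_t)=\sum_j w(Q_j(t))\le 2t|\Omega_t|$ for free. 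Second, your fallback --- deducing geometric decay of the super-level $w$-mass from the $A_\infty$ condition --- is essentially the statement that small relative Lebesgue measure forces small relative $w$-measure, i.e.\ precisely the lemma that the paper \emph{derives from} this reverse H\"older inequality; as described, that route is circular.

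The real difficulty, which your sketch never touches, is that Fujii--Wilson controls only $\int_0^\infty|\Omega_t|\,dt=\int_I M^d_I w\le[w]_{A_\infty}w(I)$, whereas the high range requires $X:=\int_{\langle w\rangle_I}^\infty t^{\epsilon}|\Omega_t|\,dt$, with the growing factor $t^\epsilon$. The step that closes the loop is: write $t^\epsilon=\langle w\rangle_I^\epsilon+\epsilon\int_{\langle w\rangle_I}^t s^{\epsilon-1}\,ds$ and apply Fubini; then note that for $t>s$ the set $\Omega_t$ meets each stopping interval $Q_j(s)$ in $\{x\in Q_j(s): M^d_{Q_j(s)}w(x)>t\}$, because every dyadic ancestor of $Q_j(s)$ has average at most $s$; hence Fujii--Wilson, applied \emph{inside each} $Q_j(s)$, gives
\begin{equation*}
\int_s^\infty|\Omega_t|\,dt\;\le\;\sum_j\int_{Q_j(s)}M^d_{Q_j(s)}w\;\le\;[w]_{A_\infty}\sum_j w(Q_j(s))\;\le\;2[w]_{A_\infty}\,s\,|\Omega_s|.
\end{equation*}
This yields the self-referential bound $X\le [w]_{A_\infty}\langle w\rangle_I^\epsilon w(I)+2\epsilon[w]_{A_\infty}X$, and with $\epsilon=\tfrac1{4[w]_{A_\infty}}$ absorption gives $X\le 2[w]_{A_\infty}\langle w\rangle_I^\epsilon w(I)$; feeding this back into the layer-cake decomposition gives $\int_I w^{1+\epsilon}\le\big(1+4\epsilon[w]_{A_\infty}\big)\langle w\rangle_I^\epsilon w(I)=2\langle w\rangle_I^\epsilon w(I)$, which is the theorem with exactly your constants. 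Note finally that the absorption must be performed on $X$ truncated to $t\le N$ (the estimates are uniform in $N$), not on $\int_I w^{1+\epsilon}$ as you propose: you have no a priori finiteness of $\int_I w^{1+\epsilon}$, so subtracting a multiple of it from both sides is illegitimate.
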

This implies the following lemma.
\begin{lemma} \label{E<Q}
For any cube $Q$ and measurable subset $E\subset Q$, we have
\begin{equation*}
w(E) \leq 2w(Q) \left( \frac{ |E| }{ |Q| } \right)^{ c / [ w ] _{A_{\infty}} },
\end{equation*}
where $c$ is an absolute constant.
\end{lemma}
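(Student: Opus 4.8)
The plan is to derive Lemma \ref{E<Q} from the sharp reverse Hölder inequality of the preceding theorem by an application of Hölder's inequality to the pair $(w^{1+\epsilon}, \mathbf 1_E)$. First I would apply Hölder's inequality on the interval $Q$ with exponents $1+\epsilon$ and $\frac{1+\epsilon}{\epsilon}$ to the product $w \cdot \mathbf 1_E$, obtaining
\begin{equation*}
w(E) = \int_E w \leq \left( \int_Q w^{1+\epsilon} \right)^{ \frac{1}{1+\epsilon} } |E|^{ \frac{\epsilon}{1+\epsilon} }.
\end{equation*}
Rewriting the integral $\int_Q w^{1+\epsilon} = |Q| \langle w^{1+\epsilon}\rangle_Q$ and invoking the reverse Hölder bound $\langle w^{1+\epsilon}\rangle_Q \leq 2 (\langle w\rangle_Q)^{1+\epsilon}$ with $\epsilon = \frac{1}{4[w]_{A_\infty}}$ converts the first factor into a power of $\langle w\rangle_Q = w(Q)/|Q|$.

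Carrying this out, the right-hand side becomes
\begin{equation*}
\left( 2 |Q| \right)^{ \frac{1}{1+\epsilon} } \langle w\rangle_Q \, |E|^{ \frac{\epsilon}{1+\epsilon} } = 2^{ \frac{1}{1+\epsilon} } \frac{w(Q)}{|Q|} |Q|^{ \frac{1}{1+\epsilon} } |E|^{ \frac{\epsilon}{1+\epsilon} },
\end{equation*}
and collecting the powers of $|Q|$ and $|E|$ yields exactly the factor $\left( |E|/|Q| \right)^{ \frac{\epsilon}{1+\epsilon} }$ together with $w(Q)$. Since $2^{ \frac{1}{1+\epsilon} } \leq 2$, this gives
\begin{equation*}
w(E) \leq 2 w(Q) \left( \frac{|E|}{|Q|} \right)^{ \frac{\epsilon}{1+\epsilon} }.
\end{equation*}

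It remains to absorb the exponent $\frac{\epsilon}{1+\epsilon}$ into the claimed form $c/[w]_{A_\infty}$. Since $|E| \leq |Q|$ the base $|E|/|Q|$ is at most $1$, so the inequality only strengthens when the exponent decreases; thus it suffices to show $\frac{\epsilon}{1+\epsilon} \geq c/[w]_{A_\infty}$ for a suitable absolute constant $c$. With $\epsilon = \frac{1}{4[w]_{A_\infty}}$ and $[w]_{A_\infty}\geq 1$ one has $1+\epsilon \leq 2$, hence $\frac{\epsilon}{1+\epsilon} \geq \frac{\epsilon}{2} = \frac{1}{8[w]_{A_\infty}}$, so $c = 1/8$ works. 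I expect the only real subtlety to be this final bookkeeping step of replacing the natural exponent $\frac{\epsilon}{1+\epsilon}$ by the cleaner $c/[w]_{A_\infty}$; everything else is a direct Hölder-plus-reverse-Hölder computation. One should also note the harmless shift in terminology between "cube" in the lemma and "interval" in the one-dimensional setting, which causes no difficulty.
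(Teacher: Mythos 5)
Your proposal is correct and follows essentially the same route as the paper's proof: H\"older's inequality combined with the sharp reverse H\"older inequality, then converting the exponent $\frac{\epsilon}{1+\epsilon}$ into $c/[w]_{A_\infty}$. You are in fact slightly more careful than the paper at the last step, where you justify (using $|E|/|Q|\leq 1$ and $[w]_{A_\infty}\geq 1$) the replacement of $\frac{\epsilon}{1+\epsilon}$ by $c/[w]_{A_\infty}$ and of $2^{1/(1+\epsilon)}$ by $2$, points the paper passes over with an equality sign.
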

\begin{proof}
Let $\epsilon$ be as before.
\begin{align*}
\int_E w & \leq \left( \int_E w^{ 1+\epsilon } \right)^{\frac{1}{1+\epsilon}} \cdot |E|^{  \epsilon / (1+\epsilon ) }  \quad \text{(H\"{o}lder)} \\
& = \langle w^{1+\epsilon} \rangle _Q ^{ \frac{1}{ 1+\epsilon } } \cdot |E|^{  \epsilon / (1+\epsilon ) } \cdot |Q|^{1/(1+\epsilon ) } \\
& \leq 2 \langle w \rangle _Q |E|^{  \epsilon / (1+\epsilon ) } \cdot |Q|^{1/(1+\epsilon ) } \quad \text{(Reverse H\"{o}lder)} \\
& = 2 w(Q) \left( \frac{|E|}{|Q|} \right) ^{ c / [ w ] _{A_{\infty}} }.
\end{align*}
\end{proof}

\subsection{The proof of the weak bound}
The idea is to group $M_Bf$'s, $B \in \ZS$, according to their magnitude and estimate each group applying \lem{E<Q} and the weighted weak bound for the maximal operator \e{sharpweightedweakmaximal}.
Denote $\alpha := \frac{1}{[w]_{\infty}}$, and for $\lambda > 0$ let
\begin{align*}
& A_0 := \{ B\in \ZS \; : \; M_B f > \alpha \lambda \}, \\
& A_j := \{ B\in \ZS \; : \; 2^{-j+1} \alpha \lambda \geq M_B f > 2^{-j} \alpha \lambda \},
\end{align*}
for $j=1,2,\dots$. Thus, $A_j$'s partition $\ZS$. We write
\begin{align*}
w\{ \ZA_{\ZS}^*f > \lambda \} &\leq \sum\limits_{j=0}^{\infty} w \Big\{ \sum\limits_{B\in A_j} ( M_B f ) \chi_B > \lambda 2^{-j/2} C \Big\} \\
&\leq  w \left( \bigcup\limits_{B\in A_0} B \right) + \sum\limits_{j=1}^{\infty} w \Big\{ \sum\limits_{B \in A_j} \chi_B > \frac{1}{\alpha} 2^{j/2} C \Big\} \\
&\leq w \big\{ Mf > \lambda \alpha \big\}  + \sum\limits_{j=1}^{\infty} w \left( \bigcup\limits_{B\in A_j} B \right) \frac{ | \{ \sum\limits_{B \in A_j} \chi_B > \frac{1}{\alpha} 2^{j/2} C \} |^{c/ [w]_{\infty} } }{ | \bigcup\limits_{B\in A_j} B |^{c/ [w]_{\infty} } } \\
&\leq w \big\{ Mf > \lambda \alpha \big\}  + \sum\limits_{j=1}^{\infty} w \left( \bigcup\limits_{B\in A_j} B \right) 2^{ - \frac{c}{\alpha} 2^{j/2} C\alpha } \\
&\leq w \big\{ Mf > \lambda \alpha \big\} + \sum\limits_{j=1}^{\infty} w \big\{ Mf > 2^{-j} \lambda \alpha \big\} 2^{-cC2^{j/2}} \\
&\leq \frac{ [w]_{A_{\infty}}^2 }{\lambda ^2 } \| M \|^2 _{ L^2\rightarrow L^{2,\infty} },
\end{align*}
where the first line is due to the triangle inequality, the third inequality follows from \lem{E<Q} and the fourth one from the fact, that $A_j$ is a sparse collection. It reimains to apply the bound \e{sharpweightedweakmaximal} to get the upper bound of \trm{Weak}.

\subsection{The lower bound of \trm{Weak}}
Let $w=|x|^{\alpha - 1}$ and $\sigma = |x|^{1 - \alpha}$ be the dual power weights, $0< \alpha <1$. We know that
\begin{equation}
    [w]_{A_2} = [\sigma ]_{A_2} \sim \frac{1}{\alpha}.
\end{equation}
Let $\ZS := \{ [0, 2^{-k} ) :\; \text{ for }k \in \mathbb{N} \}$ be a sparse family. Then, we claim
\begin{equation}\label{toprovelowerweak}
    \| \ZA_{\ZS}^* ( \sigma \Z1_{[0 ,1) } ) \|_{ L^{2,\infty} (w) } \sim [w]_{A_2}^{3/2} \| \sigma \Z1_{ [0, 1) } \|_{L^2 (w) }.
\end{equation}
The square of the right-hand side of \e{toprovelowerweak} equals $\frac{1}{(2- \alpha ) \alpha ^3} $. On the other hand,
\begin{align*}
     \| \ZA_{\ZS}^* ( \sigma \Z1_{[0 ,1) } ) \|_{ L^{2,\infty} (w) } ^2 & \geq \frac{1}{\alpha^2 } w \{ \ZA_{\ZS}^* ( \sigma \Z1_{[0 ,1) } ) > \frac{1}{\alpha} \} \\
     & = \frac{1}{\alpha^2 } w \{ \sum\limits_{k=1}^{\infty } \Z1_{ [0, 2^{-k} ) } \gtrsim \frac{1}{\alpha} \} \\
     & = \frac{1}{\alpha^2 } w ( [0, 2^{-\frac{ c }{\alpha } }  ) ) \sim \frac{ 2^{-\frac{c}{\alpha } \alpha } }{\alpha^3}.
\end{align*}
So the proof of \trm{Weak} is complete.

\section{The lower bound of \trm{Strong}} \label{StrongBound}
\subsection{Construction of the weight}
Let $0< \alpha <1$ be small enough integer power of $2$, i.e. $\alpha = 2^{-a}$ for large enough integer $a$. Let us define the weight $\sigma : \ZR \to [0, \infty )$ to be even and
\begin{equation}
    \sigma (x) := \begin{cases}
            \frac{ 2^{2k(1-\alpha)} }{ \alpha } ( x - 2^{- (k + 1) } )^{1-\alpha} , \quad x\in [ 2^{-(k+1)}, (1 + \alpha)2^{ -(k + 1) } ) \text{ for } k\in \mathbb{N} \\
            x^{\alpha - 1},\quad x\in [ (1 + \alpha)2^{-( k + 1) } ) , (1-\alpha)2^{-k} ) \text{ for } k\in \mathbb{N} \\
            \frac{ 2^{2k(1-\alpha)} }{ \alpha } ( 2^{-k} - x )^{1-\alpha}, \quad x \in [(1-\alpha)2^{-k}, 2^{-k} ) \text{ for } k\in \mathbb{N} \\
            x^{\alpha - 1},\quad x\in [ \frac{1}{2}, \infty ).
    \end{cases}
\end{equation}
%This is some mixture of power weights $x^{\alpha-1}$ and $x^{1-\alpha}$ that I did not encounter in the literautre before.
The dual weight to $\sigma$ is $w(x) := \sigma(x)^{-1}$. We will prove that
\begin{equation}\label{A_2constant}
    \sup_{I} \frac{1}{|I|^2} \big( \int_I w \big) \cdot \big( \int_I \sigma \big) \sim \frac{1}{\alpha},
\end{equation}
that is, $\sigma \in A_2$ with $[\sigma]_{A_2} \sim \frac{1}{\alpha}$.

First, we show that \e{A_2constant} holds for dyadic intervals. Let us partition all dyadic intervals into three groups.
\begin{enumerate}
    \item[a.] $I = [0, 2^{-k})$ for some $k\in \mathbb{N}_0$. Then, we compute
    \begin{align*}
        \int\limits_{2^{ -(k+1) } }^{2^{-k} } w(x)dx &= \int\limits_{(1 + \alpha) 2^{-(k+1)} }^{(1-\alpha )2^{-k} } x^{ 1 - \alpha} dx + \alpha 2^{2k(\alpha - 1) } \int\limits_{(1-\alpha )2^{-k} }^{ 2^{-k} }  ( 2^{-k} - x )^{\alpha - 1} dx  \\
        &\quad + \alpha 2^{2k(\alpha - 1) } \int\limits_{2^{-(k +1) } }^{ (1+ \alpha ) 2^{-(k+1) } }  ( x - 2^{-(k +1) } )^{\alpha - 1} dx \\
        &= \frac{ (1 - \alpha )^{2- \alpha} 2^{ -( 2-\alpha)k } - (1+\alpha)^{2-\alpha } 2^{ -(2 - \alpha)(k+1) } }{ 2- \alpha} \\
        &\quad + \alpha 2^{ 2k(\alpha - 1) } \cdot \frac{ \alpha^{\alpha} ( 2^{-k\alpha} + 2^{-(k+1) \alpha} ) }{\alpha} \\
        &= c(\alpha) 2^{ -k ( 2-\alpha ) }.\numberthis \label{calculationforw}
    \end{align*}
    In the above computations and below $c(\alpha)$ is a constant depending on $\alpha$ absolutely bounded and away from $0$. It will be different at each occurence. Next, we have
    \begin{equation}
        \int\limits_0^{2^{-k} } w(x)dx = \sum\limits_{j=k}^{\infty} \int\limits_{ 2^{-(j+1)} }^{ 2^{-j} } w (x) dx = \sum\limits_{j=k}^{\infty} c(\alpha) 2^{ -j ( 2-\alpha ) } = c(\alpha ) \cdot 2^{ -k ( 2 - \alpha ) }. 
    \end{equation}
    
    For $\sigma$ we have
    \begin{align*}
        \int\limits_{ 2^{ -(k+1) } }^{ 2^{-k} } \sigma (x) dx &= \int\limits_{ (1+\alpha) 2^{ -(k+1) } }^{ (1-\alpha) 2^{ -k} } \sigma (x) dx + \int\limits_{ (1 - \alpha) 2^{-k} }^{ 2^{ -k } } \sigma (x) dx + \int\limits_{ 2^{-(k + 1) } }^{ (1 + \alpha) 2^{ -(k+1) } } \sigma (x) dx \\
        &= \frac{ ( 1 - \alpha )^{ \alpha } 2^{ -k \alpha } - (1 + \alpha)^{\alpha} 2^{ -(k+1 ) \alpha } }{ \alpha } + \frac{ 2^{ 2k (1-\alpha ) } }{\alpha} \cdot \\
        & \quad \cdot \left( \int\limits_{ 2^{-k} ( 1 - \alpha ) }^{ 2^{-k} } ( 2^{-k} - x )^{ 1 - \alpha } dx + \int\limits_{ 2^{-(k+1) } }^{ ( 1 + \alpha ) 2^{-(k + 1) } } ( x - 2^{-(k + 1)} )^{ 1 - \alpha } dx \right) \\
        &= c( \alpha ) 2^{- k \alpha } + \frac{ 2^{2k(1-\alpha )} }{ \alpha } \cdot \frac{ \alpha^{ 2 - \alpha } ( 2^{ -k (2 - \alpha) } + 2^{-(k+1)(2-\alpha ) } ) }{2 - \alpha } \\
        &= c(\alpha) \frac{ 2^{ -k\alpha } }{\alpha} + \alpha 2^{-k\alpha} = c(\alpha)\frac{ 2^{ -k\alpha } }{\alpha}. \numberthis 
    \end{align*}
    Then, we have
    \begin{equation}\label{calculationforsigma}
        \int\limits_0^{ 2^{-k} } \sigma (x) dx = \sum\limits_{j = k}^{ \infty } \int\limits_{ 2^{ -(j+1) } }^{ 2^{ -j} } \sigma (x) dx = \sum\limits_{j = k}^{ \infty } c(\alpha) \frac{ 2^{ -j\alpha } }{\alpha} = c(\alpha)\frac{ 2^{ -k\alpha } }{\alpha}.
    \end{equation}
    Combining the two computations above, we have for \e{A_2constant}
    \begin{equation}
        2^{2k} \big( \int\limits_0^{2^{-k} } w \big) \cdot \big( \int\limits_0^{2^{-k} } \sigma \big) = c(\alpha) 2^{2k} 2^{ - k(2 - \alpha ) } \frac{ 2^{-k \alpha } }{ \alpha} \sim \frac{1}{\alpha}.
    \end{equation}
    \item[b.] One of the following holds: for some $k\in \mathbb{N}_0$ $I\subset [2^{ -(k+1) } , (1+\alpha) 2^{-(k+1)} )$, $I\subset [(1+\alpha)2^{ -(k+1) } , (1-\alpha) 2^{-k} )$ or $I\subset [ (1-\alpha ) 2^{-k}$. On these intervals, the weights $w$ and $\sigma$ are just rescaled versions of the power weights. Thus, we immediately have 
    \begin{equation}\label{caseb}
        \frac{1}{|I|^2} \big( \int_I w \big) \cdot \big( \int_I \sigma \big) \lesssim \frac{1}{\alpha},
    \end{equation}
    by the $A_2$ characteristic of the power weights.
    \item[c.] $I \subset [2^{-(k+1)} , 2^{-k} )$ and either $[ (1-\alpha ) 2^{-k}, 2^{-k} ) \subset I$ or $[ 2^{-(k+1) }, (1 + \alpha ) 2^{-(k + 1) } ) \subset I$ for some $k\in \mathbb{N}_0$. This is the intermediate case between the above two. The computation for the choice of the last two conditions is identical, so we consider only one of them. Let $|I| = 2^{ - m}$ so that $I = [2^{-k} - 2^{-m} , 2^{-k})$ and $k + 2\leq m \leq k+a$, where we recall $\alpha = 2^{-a}$. We start calculating
    \begin{align*}
        \int\limits_{2^{-k} - 2^{-m} }^{2^{-k} } w(x)dx &= \int\limits_{ 2^{-k} - 2^{-m} }^{ (1-\alpha )2^{-k } } x^{ 1 - \alpha} dx + \int\limits_{(1-\alpha )2^{-k} }^{ 2^{-k} }  \Big( 2^{2k} \cdot \frac{2^{-k} - x }{ \alpha } \Big)^{\alpha - 1} dx  \\
        &= \frac{ (1 - 2^{-a} )^{2- \alpha} 2^{ -( 2-\alpha)k } - 2^{ -(2 - \alpha)k } (1-2^{k-m} )^{2- \alpha} }{ 2- \alpha} \\
        &\quad + \alpha^{1-\alpha } \cdot 2^{ 2k(\alpha - 1) } \int\limits_{ 2^{ -k } (1-\alpha) }^{ 2^{-k} } ( 2^{-k} - x )^{\alpha - 1} dx \\
        &= c(\alpha) 2^{ -k ( 2-\alpha ) }
        \Big( ( 1 + \frac{2^{k-m} - 2^{-a} }{ 1 - 2^{k-m} } )^{2-\alpha} - 1 \Big)  +2^{ -k ( 2 - \alpha ) } \\
        &= c(\alpha ) 2^{ - k ( 2 - \alpha ) } 2^{ k - m} + 2^{ - k(2 - \alpha) } = c(\alpha) 2^{-k(2-\alpha)}.
    \end{align*}
    For $\sigma$ we write
    \begin{align*}
        \int\limits_{ 2^{-k} - 2^{-m} }^{ 2^{-k} } \sigma (x) dx &= \int\limits_{ 2^{-k} - 2^{-m} }^{ (1-\alpha) 2^{ -k} } \sigma (x) dx + \int\limits_{ (1 - \alpha) 2^{-k} }^{ 2^{ -k } } \sigma (x) dx \\
        &= \frac{ ( 1 - \alpha )^{ \alpha } 2^{ -k \alpha } - 2^{ -k \alpha } (1 - 2^{k-m} )^{\alpha} }{ \alpha } \\
        &\quad + 2^{ 2k (1-\alpha ) } \cdot \alpha^{ \alpha - 1 } \cdot \int\limits_{ 2^{-k} ( 1 - \alpha ) }^{ 2^{-k} } ( 2^{-k} - x )^{ 1 - \alpha } dx \\
        &= c( \alpha ) 2^{ - k \alpha } \frac{ ( 1 + \frac{2^{k-m} - 2^{-a} }{ 1 - 2^{k-m} } )^{\alpha} - 1 }{\alpha} + \alpha 2^{-k\alpha} \\
        &= c(\alpha) 2^{-k\alpha} \cdot 2^{k-m} + 2^{-a} \cdot 2^{-k \alpha} = c( \alpha ) 2^{-k \alpha + k -m}.
    \end{align*}
    Thus, for \e{A_2constant} we have
    \begin{align*}
        \frac{1}{|I|^2} \big( \int_I w \big) \cdot \big( \int_I \sigma \big) &= 2^{2m} \cdot c(\alpha ) 2^{-k\alpha + k - m} \cdot 2^{ - k (2-\alpha ) } \\
        &\sim 2^{m-k} \lesssim 2^{a} = \frac{1}{\alpha}.
    \end{align*}
\end{enumerate}

We conclude, that the dyadic $A_2$ characteristic of $w$ is $\frac{c}{\alpha}$. It is important here, that the characteristic is attained at a large number of dyadic intervals and not only on one chain.

We turn to the case when $I$ is a general interval. First of all, the arguments in case b above are also true for all intervals $I$ due to the $A_2$ characteristic of power weights. On the other hand, if $I$ can be covered by a dyadic interval of a comparable size, then again \e{caseb} holds. Otherwise, let $k$ be such that $I \subset [0, 2^{-(k-1)})$, $I \not\subset [0, 2^{-(k+1)})$ and $|I| \lesssim 2^{-k}$. We distinguish two cases.
\begin{enumerate}
    \item[(i)] One of the following holds: $(1+\alpha)2^{-(k+1)} \in I$, $(1-\alpha) 2^{-k} \in I$, $(1+\alpha)2^{-k} \in I$, $(1-\alpha) 2^{-(k-1)} \in I$. All four cases are similar, so we only consider the second one. For $\sigma$ we have
    \begin{equation}
        \int_{I} \sigma (x) dx \sim 2^{-k (\alpha -1) } |I|.
    \end{equation}
    As for $w$ we write
    \begin{equation}\label{caseiw}
        \int_{I} w(x) dx \sim \big( (1-\alpha)2^{-k} - l(I) \big) 2^{ -k ( 1 - \alpha) } + \int_{(1-\alpha )2^{-k} }^{ r(I) } w(x)dx,
    \end{equation}
    where $l(I)$ and $r(I)$ are the left and right endpoints of $I$
    \item[(i.1)] If $r(I) < (1-\alpha )2^{-k} + \alpha 2^{-(k+1)} $, then we have
    \begin{equation}
         \int_{I} w(x) dx \sim |I| 2^{-k( 1 - \alpha) },
    \end{equation}
    and so
    \begin{equation}
        \frac{1}{|I|^2} \big( \int_I w \big) \cdot \big( \int_I \sigma \big) \lesssim 1.
    \end{equation}
    \item[(i.2)] If $(1-\alpha )2^{-k} + \alpha 2^{-(k+1)} < r(I)$, then using the computation in \e{calculationforw}, we have
    \begin{equation}
        \int_{(1-\alpha )2^{-k} }^{ r(I) } w(x) dx \lesssim 2^{-k(2-\alpha ) }.
    \end{equation}
    Hence, we obtain
    \begin{align*}
        \frac{1}{|I|^2} \big( \int_I w \big) \cdot \big( \int_I \sigma \big) &\lesssim \frac{1}{|I|^2} 2^{- k (2 - \alpha) } \cdot |I|2^{-k(1-\alpha)} \lesssim \frac{2^{-k}}{|I|} \lesssim \frac{1}{\alpha},
    \end{align*}
    where the last step is due to $l(r) < (1-\alpha )2^{-k} < (1-\alpha )2^{-k} + \alpha 2^{-(k+1)} < r(I)$.
    \item[(ii)] Let us have $(1-\alpha) 2^{-k} \notin I$, $(1+\alpha) 2^{-k} \notin I$ and $2^{-k} \in I$. Without loss of generality we can assume $r(I) - 2^{-k} \leq 2^{-k} - l(I)$. Then, we have $|I| \sim ( 2^{-k} - l(I) )$. Furthermore,
    \begin{align*}
         \int_{I} \sigma (x) dx \sim \int\limits_{l(I)}^{2^{-k}} \sigma (x) dx,
    \end{align*}
    and
    \begin{align*}
        \int_{I} w (x) dx \sim \int\limits_{l(I)}^{2^{-k}} w (x) dx.
    \end{align*}
    Thus, as $w$ and $\sigma$ are just power weights on $[ (1-\alpha)2^{-k} , 2^{k} )$, we are in the realm of the usual power weights and the estimate \e{A_2constant} holds.
\end{enumerate}

\subsection{Construction of the sparse family}
Let us take the following sparse family:
\begin{equation}
    \mathcal{S} := \{  [2^{-k} - 2^{-j} , 2^{-k} ) \; : \text{ for all } k,j \in \mathbb{N} \text{ and } j \geq a+k \}.
\end{equation}
We will denote by $B_{k,j} := [2^{-k} - 2^{-j} , 2^{-k} )$.
Using \e{calculationforsigma}, we have
\begin{equation}
    M_{ B_{k,j} } ( \sigma ) \sim 2^{k} \int\limits_0^{ 2^{-k} } \sigma (x) dx \sim \frac{ 2^{ k(1- \alpha ) } }{ \alpha },
\end{equation}
and the corresponding strong-sparse operator is
\begin{equation}\label{counterexamplestrongsparse}
    \ZA_{\ZS}^* f(x) := \sum\limits_{k=1}^{\infty} \frac{ 2^{ k(1 - \alpha ) } }{ \alpha } \sum\limits_{j=a+k}^{\infty} \Z1_{B_{j,k} } (x).
\end{equation}
\subsection{The lower bound}
We claim that
\begin{equation}\label{counterexampletoprove}
    \int\limits_0^1 \ZA_{\ZS}^* (\sigma ) (x)^2 w(x) dx \sim \frac{1}{\alpha ^4} \int\limits_0^1 \sigma (x) dx.
\end{equation}
By \e{counterexamplestrongsparse}, we can write
\begin{equation}\label{sumandsparse}
    \int\limits_0^1 S^* (\sigma ) (x)^2 w(x) dx \sim \sum\limits_{k=1}^{\infty} \frac{ 2^{ 2k(1-\alpha) } }{ \alpha^2 } \int\limits_0^1 \Big( \sum\limits_{j=k+a}^{\infty } \Z1_{ B_{k,j} } (x) \Big)^2 w(x)dx.
\end{equation}
We make a change of variables in the integral and see that it realizes the sharp constant for the regular sparse operator. Putting $y = \frac{x - (1-\alpha )2^{-k} }{2^{-k} \alpha}$, we can write
\begin{align*}
    \int\limits_0^1 \Big( \sum\limits_{j=k+a}^{\infty } \Z1_{ B_{k,j} } (x) \Big)^2 w(x)dx &= \alpha 2^{\alpha k - 2k} \int\limits_0^1 \Big( \sum\limits_{j=1}^{\infty } \Z1_{ [0, 2^{-j} ) } (y) \Big)^2 y^{\alpha - 1} dy \\
    & \sim \alpha 2^{\alpha k - 2k} \cdot \frac{1}{\alpha^3} = \frac{ 2^{\alpha k - 2k} }{\alpha^2},
\end{align*}
where the penultimate estimate follows by the sparse estimate. Plugging this into \e{sumandsparse}, we obtain
\begin{equation*}
    \int\limits_0^1 \ZA_{\ZS}^* (\sigma ) (x)^2 w(x) dx \sim \sum\limits_{k=1}^{\infty} \frac{ 2^{ 2k(1-\alpha) } }{ \alpha^2 } \cdot \frac{ 2^{\alpha k - 2k} }{\alpha^2} \sim \frac{1}{\alpha ^5} \sim \frac{1}{\alpha^4} \int\limits_0^1 \sigma .
\end{equation*}
This finishes the proof of \trm{Strong}.

\section{Proof of \trm{Chain}}\label{ChainCase}
We can assume that the intervals in the sparse family are in some bounded interval, and the general case will follow by a limiting argument. Let us enumerate the intervals of the sparse family $\ZS$.
\begin{equation*}
B_1 \supset B_2 \supset \cdots \supset B_k \supset \cdots.
\end{equation*}
Let $g \in L^2(w)$ and denote by $\pi (B_i) \supset B_i$ the largest interval for which $M_{B_i} (g) \leq 2 \langle g \rangle _{ \pi (B_i ) }$. We can enumerate $\{ \pi (B_i) \}$ by $A_1 \supset A_2 \supset \dots $. As $B_i$'s are embedded, we can choose $A_i$ to be embedded and also sparse.

Consider the following function
\begin{align*}
\tilde{g} (x) = \begin{cases}
\frac{1}{| A_{i} \setminus A_{i+1} |} \int_{ A_{i} \setminus A_{i+1} } g, \quad x\in A_{i} \setminus A_{i+1} \text{ for some }i\in\mathbb{N}, \\
g(x), \text{ otherwise.}
\end{cases}
\end{align*}
First of all, it is clear that
\begin{equation}\label{meanpreserved}
    \int_{A_i} g = \int_{A_i} \tilde{g}.
\end{equation}
By construction, $\langle g \rangle _{A_i } \leq 2 \langle g \rangle_{A_{i+1}}$, so $\frac{1}{| A_i \setminus A_{i+1} |} \int_{A_i \setminus A_{i+1} } g \lesssim \frac{1}{|A_i|} \int_{A_i} g \leq 2 \langle g \rangle_{A_{i+1}}$. Let $B \in \ZS$ be such that $A_i = \pi (B)$. Then, $A_{i+1} \subsetneq B$ due to the choice of $\pi (B)$. Using these observations, we have
\begin{align*}
    \langle \tilde{g} \rangle_{A_i} &= \langle g \rangle_{A_i} = \frac{1}{|A_i| } \Big( \int_{A_{i+1} } g + \int_{A_i \setminus A_{i+1} } g \Big) \\
    & \lesssim \frac{1}{|A_i| } \int_{A_{i+1} } \tilde{g} + \frac{1}{ |B \setminus A_{i+1} | } \int_{B \setminus A_{i+1}} \tilde{g}\\
    & \lesssim \frac{ 1 }{|B|} \int_B \tilde{g}.
\end{align*}
We conclude, that for all $x$
\begin{equation}\label{dominationbysparse}
\ZA_{\ZS}^* g(x) \lesssim \ZA_{\ZS} \tilde{g} (x).
\end{equation}
We turn to the norm of $\tilde{g}$.
\begin{align*}
\int_{\ZR} \tilde{g}^2 w & = \sum\limits_i \int_{ A_i\setminus A_{i+1} } \tilde{g} ^2 w + \int_{\ZR \setminus \cup(A_i\setminus A_{i+1}) } g^2 w \\
& \leq \sum\limits_i \left( \frac{1}{| A_i \setminus A_{i+1} |} \int_{ A_i \setminus A_{i+1} } g \right)^2 w( A_i \setminus A_{i+1} ) + \int_{\ZR \setminus \cup(A_i\setminus A_{i+1}) } g^2 w  \\
& \leq \sum\limits_{i} \frac{w(A_i \setminus A_{i+1}) \cdot \sigma (A_i \setminus A_{i+1})}{|A_i \setminus A_{i+1}|^2} \int_{A_{i+1} \setminus A_{i}} g^2 w + \int_{\ZR \setminus \cup(A_i\setminus A_{i+1}) } g^2 w \\
& \leq \sum\limits_{i} \frac{w(A_i) \cdot \sigma (A_i) }{|A_i|^2} \int_{A_{i+1} \setminus A_{i}} g^2 w + \int_{\ZR \setminus \cup(A_i\setminus A_{i+1}) } g^2 w \\
& \lesssim [w]_{A_2} \int_{\ZR} g^2 w.
\end{align*}
Combining the last estimate, \e{dominationbysparse} and the sparse bound \e{sharpweightedsparse} we conclude
\begin{align*}
    \| \ZA_{\ZS}^* g \|_{L^2(w) } \lesssim \| \ZA_{\ZS} \tilde{g} \|_{L^2(w) } \lesssim [w]_{A_2} \| \tilde{g} \|_{L^2(w)} \lesssim [w]_{A_2}^{\frac{3}{2}} \|g \|_{L^2(w)}.
\end{align*}
And the proof of \trm{Chain} is complete.

\bibliographystyle{amsalpha}

\bibliography{references}

%\begin{thebibliography}{99}
%\bibitem{Lac1}
%M.T. Lacey, E.T. Sawyer, I. Uriarto-Tuero, Two weight inequalities for discrete positive operators, arXiv:0911.3437
%\bibitem{Lac2} 
%M.T. Lacey,  An elementary proof of the $A_2$ bound, preprint, available at http://arxiv.org/abs/1501.05818
%\bibitem{Kar1}
%G. A. Karagulyan, An abstract theory of singular operators, preprint https://arxiv.org/abs/1611.03808
%\bibitem{Kar2}
%G.A. Karagulyan, M.T. Lacey, On logarithmic bound of maximal sparse operators, preprint, arXiv:1802.00954
%\bibitem{Per1}
%T. Hyt\"onen, C. Perez, E. Rela, Sharp reverse Holder property for $A_{\infty}$ weights on spaces of homogeneous type, 	arXiv:1207.2394
%\bibitem{Per2}
%T. Hyt\"onen, C. Perez, Sharp weighted bounds involving $A_{\infty}$, arXiv:1103.5562
%\bibitem{Hyt}
%T. Hyt\"onen, The $A_2$ theorem: remarks and complements, arXiv:1212.3840
%\bibitem{Ler}
%A.K. Lerner, A simple proof of the $A_2$ conjecture, Int. Math. Res. Not., 14 (2013), 3159-3170.
%\bibitem{Moen}
%K. Moen, Sharp weighted bound without testing or extrapolation, preprint, 	arXiv:1210.4207
%\bibitem{Buckley}
%S. M. Buckley, Estimates for operator norms on weighted spaces and reverse Jensen inequalities, Trans. Amer. Math. Soc., 340 (1993), 253-272
%\bibitem{Treil}
%S. Treil, A Remark on two weight estimates for positive dyadic operators, preprint, arXiv:1201.1455
%\bibitem{Torchinsky}
%A. Torchinsky,  Real-Variable methods in harmonic analysis, Academic %Press, Inc., Orlando, Florida, 1986
%\end{thebibliography}

\end{document}